\theoremstyle{plain}
\newtheorem{theorem}{Theorem}[section]
\newtheorem{proposition}[theorem]{Proposition}
\theoremstyle{definition}
\newtheorem{definition}[theorem]{Definition}
\newtheorem{remark}{Remark}
\numberwithin{equation}{section}
\newtheorem{example}{Example}
\title[Primitive Central idempotents of a Rational Abelian Group Algebra]{Primitive Central idempotents and the Wedderburn Decomposition of a Rational Abelian Group Algebra}
\author[Ravi S. Kulkarni and Soham S. Pradhan]{Ravi S. Kulkarni and Soham S. Pradhan}
\begin{document}
\maketitle
\author
\begin{abstract}
In this paper we give an explicit description of primitive central idempotents of rational group algebras of finite abelian groups using {\it{long presentation}}, and determine their Wedderburn decompositions.\\\
{{\tiny{\it Key Words:} primitive central idempotent; group algebra; {\it{long presentation}}; Wedderburn decomposition.}}
\end{abstract}
\section{Introduction}
The classical approach of computing primitive central
idempotents of a rational abelian group algebra $\mathbb{Q}[G]$, is to first compute the primitive central idempotents
$$e(\chi) = \frac{\chi(1)}{\mid{G}\mid}{\sum_{g \in G} {\chi(g^{-1})g}}$$ of $\mathbb{C}[G]$
associated with complex irreducible characters $\chi$, and then sum the primitive central idempotents $e(\sigma o \chi)$ with
$\sigma \in $ Gal$({\mathbb{Q}(\chi)}/{\mathbb{Q}})$ to obtain the associated rational primitive central idempotent
$$e_{\mathbb{Q}}(\chi)=\displaystyle\sum_{\sigma \in Gal({\mathbb{Q}(\chi)}/{\mathbb{Q}})}{\sigma(e(\chi))}.$$
An explicit description of primitive central idempotents of
rational group algebra of a finite abelian group using subgroups structure was considered by several authors (see Ayoub \cite{Ay}, Theorem 5, Milies\cite{Mi}, Theorem 1.4, Jespers\cite{Je}, Theorem 1.3). In this paper, we aim to give an another explicit description of primitive central idempotents rational group algebra of a finite abelian group using {\it{long presentation}}.

It is a known fact that every primitive central idempotent of rational group algebra of a finite abelian group is product of primitive central idempotents of its $p$-primary parts, $p$ a prime. This result brings our attention on the problem of computing primitive central idempotents of  rational group algebra of a finite abelian $p$-group. In this paper, we approach to this problem in the following way.

Let $G$ be an abelian $p$-group. Then $G$ has a subnormal series
$$\langle{e}\rangle = G_{o} < G_{1} <  \dots  < G_{n} = G$$
such that each factor group $G_{i}/G_{i - 1}$ is a cyclic group of order $p$. Existence of this composition series allows us to define {\it long presentation} (see section 2) of $G$, and moreover, there is an {\it{enumeration}} (see section 2) in the system of long generators.
We shall inductively compute the complete set of primitive central idempotents of $\mathbb{Q}[G]$ using that {\it{long presentation}}.

A remarkable thing is that as a consequence of this inductive process, every primitive central idempotents of $\mathbb{Q}[G]$ can be expressed in  product form. Moreover, for a cyclic group of order $p^{n}$, $n$ a positive integer, in terms of characters the expression of each primitive central idempotent of
complex group algebra has $p^n$ terms, and such a primitive central idempotent factors into $n$ factors, each factor containing $p$ terms.
So each primitive central idempotent has an expression containing $pn$ terms, and therefore over $\mathbb{Q}$ this number is less than or equal to $pn$.

It is an well known fact that Wedderburn decomposition of rational group algebra of a finite abelian group is direct sum of cyclotomic fields. The Classical theory says that the coefficient of cylotomic field $\mathbb{Q}(\zeta_{p^{r}})$, where $\zeta_{p^{r}}$ is a primitive $p$th root of unity, appearing in the Wedderburn decomposition of $\mathbb{Q}[G]$ is equal to the number of subgroups of $G$ with factor groups isomorphic to cyclic group of order ${p^r}$. We shall explicitly compute those coefficients.

We briefly describe the organization of the paper. In section 2, we shall define {\it{short and long presentations}} of an abelian $p$-group.
In section 3, we shall define PCI-diagrams of an abelian $p$-group. In section 4, we shall give an explicit expression of primitive central idempotents of an abelian $p$-group over an algebraically closed field using {\it{long presentation}}.
In section 5, we shall give an explicit expression of primitive central idempotents of rational group algebra of a finite abelian $p$-group using {\it{long presentation}}.
In section 6, we shall compute coefficients of cyclotomic fields appearing in the Wedderburn decomposition of rational group algebra of a finite abelian $p$-group.

We use the following notations. Throughout this paper we assume that $G$ is a finite group. We denote the order of $G$ by $|G|$.
By $F$ we mean a field with characteristic does not divide $|G|$.
By $F[G]$, we mean the group algebra of $G$ over $F$. For $X \subset G$, $\langle{X}\rangle$
denotes the subgroup generated by $X$.
For $H \leqq G$, $\widehat{H}$ denotes the idempotent $\frac{1}{\mid{H}\mid}\sum_{h \in H}h$ of $\mathbb{Q}[G]$.
We denote cyclic group of order $n$ by $C_{n}$.
 For a positive integer $n$, $\zeta_{n}$ denotes a complex primitive $n$th root of unity and $\mathbb{Q}(\zeta_{n})$ denotes $n$th cyclotomic field over $\mathbb{Q}$. By $p$ we always mean a prime number. Throughout this paper we use $e_{X}$ for $\frac{1 + X + \dots + X^{p - 1}}{p}$, where $X$ is an indeterminate, and  $e^{'}_{X}$ for $1 - e_{X}$.
\section{Short and Long Presentations of an Abelian $p$-Group}
Let $G$ be an abelian $p$-group of order $p^N$. It is known that abelian groups of order $p^N$ are
parametrized by partitions of $N$. Let $N = s_1 + s_2+ ... + s_m$ be a partition of $N$, and let $r_i$ is a divisor of $s_i$, so that after re-indexing $r_1 > r_2 > \dots > r_m \geq 1$, and for each $i$, let $s_i = r_il_i$.
To this partition of $N$, we associate the abelian $p$-group
\begin{equation}
G = {\prod_{i = 1}^{m}\prod_{j = 1}^{l_i}{C_{p^{r_{i}}, j}}},
\end{equation}
where $C_{p^{r_i}, j}$ denotes the $j$th factor of the {\it{homo-cyclic component}} of exponent ${p^{r_i}}$.  We define a {\it{short presentation }} of $C_{p^{r_i}, j}$ as $\langle{ y_{(r_{i},j)}| y^{p^{r_i}}_{(r_{i},j)} = e}\rangle$, and the generator $y_{(r_{i},j)}$ is called {\it{short generator}} of $C_{p^{r_i}, j}$. Therefore a {\it{short presentation }} of $G$ is defined by
\begin{align*}
\prod_{i = 1}^{m}\prod_{j = 1}^{l_i}\langle{y_{(r_{i},j)} ~|~ y^{p^{r_i}}_{(r_{i},j)} = e}\rangle.
\end{align*}
Notice that a short presentation contains minimum number of generators. Now we make a refinement of {\it{short presentation}}, and use the terminology {\it{long presentation}} for that. We use three in-dices to represent a generator in a {\it{long presentation}} of $G$. We define a {\it{long presentation}} of $C_{p^{r_i}, j}$ as
\begin{align*}
\langle x_{\{(r_{i},j),r_{i}\}}, x_{\{(r_{i},j),r_{i} - 1\}}, ... , x_{\{(r_{i},j), 1\}} |
& x^{p}_{\{(r_{i},j),r_{i}\}} =x_{\{(r_{i},j),r_{i} - 1\}},\\
& x^{p}_{\{(r_{i},j),r_{i} - 1\}} = x_{\{(r_{i},j),r_{i} - 2\}},\\
& \dots ,\\
& x^{p}_{\{(r_{i},j),1\}} = 1 \rangle.
\end{align*}
We call the set $\{x_{\{(r_{i},j),r_{i}\}}, x_{\{(r_{i},j),r_{i} - 1\}}, ... , x_{\{(r_{i},j), 1\}}\}$, a {\it{system of long generators}} of $C_{p^{r_i}, j}$.
Therefore a {\it{long presentation}} of $G$ is defined by
\begin{align*}
\prod_{i = 1}^{m}\prod_{j = 1}^{l_i}\langle x_{\{(r_{i},j),r_{i}\}}, x_{\{(r_{i},j),r_{i} - 1\}}, ... , x_{\{(r_{i},j), 1\}} |
& x^{p}_{\{(r_{i},j),r_{i}\}} = x_{\{(r_{i},j),r_{i} - 1\}},\\
& x^{p}_{\{(r_{i},j),r_{i} - 1\}} = x_{\{(r_{i},j),r_{i} - 2\}},\\
&\dots ,\\
&x^{p}_{\{(r_{i},j),1\}} = 1, \\
& com \rangle.
\end{align*}
We call $(s,j)$, the place index and $a$, the power index of the long generator $x_{\{(s,j), a\}}$. An interesting thing is that, there is an enumeration in a system of long generators with respect to the lexicographic order. Note that, every element of $G$ can be expressed uniquely as product of $x^{\alpha_{\{{(s,j), a}\}}}_{\{(s,j), a\}}$'s, where
$0 \leq \alpha_{\{{(s,j), a}\}} \leq p - 1$.

\section{PCI-Diagram of an Abelian $p$-Group}
In this section we define PCI-diagram of an abelian $p$-group.
\begin{definition}
Let $G$ be an abelian $p$-group of order $p^N$. Then $G$ has a subnormal series
$$\{e\} = G_{o} < G_{1} < \dots < G_{N} = G$$ such that
$G_{i}/ G_{i - 1} = \langle{x_{i}{G_{i - 1}}}\rangle$, for some $x_{i}$ in $G_{i}$  and is isomorphic to $C_{p}$.
Let $p$ be an invertible element in $F$. Let $I_{G_{i}}$ denotes the complete set of primitive central idempotents of $F[G_{i}]$,
where $i$ runs over the set  $\{0, 1, \dots , N\}$. Let $e$ be an element in $I_{G_{i}}$, and $(\eta, W)$ be its associated irreducible $F$-representation. Suppose that the induced representation Ind$(\eta, W)\uparrow_{G_{i-1}}^{G_{i}}$ decomposes into $l$ distinct irreducible
$F$-representations $(\rho_{1}, V_{1})$, $(\rho_{2}, V_{2})$, $\dots$ , $(\rho_{l}, V_{l})$ with multiplicities $n_{1}$, $n_{2}$, $\dots$, $n_{l}$ respectively, i.e.,
\begin{center}
 Ind$(\eta, W)\uparrow_{G_{i-1}}^{G_{i}} = \displaystyle{\bigoplus_{j = 1}^{l}{n_{j}}(\rho_{j}, V_{j})}$.
\end{center}
For each $j$, let $e_{j}$ be the primitive central idempotent associated to the irreducible representation  $(\rho_{j}, V_{j})$.
We define the PCI-diagram associated to the above composition series is a graph, whose vertex set is $\cup_{i = 0}^{n}{I_{G_{i}}}$, and the vertex $e$ is adjacent to $l$ vertices $e_{1}, e_{2}, \dots , e_{l}$. We call $I_{G_{i}}$ as vertex set at the $i$th level of the PCI-diagram.
\end{definition}
\begin{remark}
The definition of PCI-diagram can be generalized for finite solvable groups, as a finite solvable group always has a subnormal series
with successive quotient groups are cyclic groups of prime order.
\end{remark}
\section{Primitive Central Idempotents of an Abelian Group Over an Algebraically Closed Field:}
In this section, we compute primitive central idempotents of an abelian group over an algebraically closed field.
But to compute primitive central idempotents of an abelian group over an algebraically closed field it is sufficient to compute primitive central idempotents of a cyclic group of prime power order. In this case, we shall see that the primitive central idempotents factor nicely.
\begin{theorem}
Let $G$ be $C_{p^{n}}$, with the long presentation:
\begin{center}
 $G = \langle{x_{1}, x_{2}, \dots ,x_{n} \mid x_{1}^{p} = 1, x_{2}^{p} = x_{1}, \dots , x_{n}^{p} = x_{n - 1}}\rangle$.
\end{center}
Let $F$ be an algebraically closed field with characteristic 0. Let $\zeta_{n}$ be a $p^{n}$th root of unity in $F$.
Then every primitive central idempotent of ${F}[G]$ is of the form:
\begin{center}
$e_{\zeta_{1}x_{1}}e_{\zeta_{2}x_{2}} \dots e_{\zeta_{n}x_{n}}$,
\end {center}
where $\zeta_{i}$'s are $p^i$-th power roots of unity in $F$ defined by $\zeta^{p}_{n} = \zeta_{n - 1}, \dots, \zeta^{p}_{2} = \zeta_{1}$.

\end{theorem}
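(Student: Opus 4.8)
The plan is to reduce everything to the classical primitive central idempotents $e(\chi) = \frac{\chi(1)}{|G|}\sum_{g \in G}\chi(g^{-1})g$ recalled in the introduction. Since $F$ is algebraically closed of characteristic $0$ and $G = C_{p^n}$ is abelian, $F[G]$ is semisimple and splits as a product of $p^n$ copies of $F$; its $p^n$ primitive central idempotents are exactly the $e(\chi)$, one for each linear character $\chi$ of $G$ (so $\chi(1)=1$). Thus it suffices to show that each product $e_{\zeta_1 x_1}\cdots e_{\zeta_n x_n}$ equals some $e(\chi)$, and that as $\zeta_n$ runs over the $p^n$-th roots of unity these exhaust all the $e(\chi)$.

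First I would expand the product directly from the definition $e_X = \frac{1}{p}\sum_{k=0}^{p-1}X^k$. Since $G$ is abelian,
$$\prod_{i=1}^n e_{\zeta_i x_i} = \frac{1}{p^n}\sum_{(k_1,\dots,k_n) \in \{0,\dots,p-1\}^n}\Big(\prod_{i=1}^n \zeta_i^{k_i}\Big)\Big(\prod_{i=1}^n x_i^{k_i}\Big).$$
The key bookkeeping step is to record, from the long presentation, that $x_i = x_n^{p^{n-i}}$ and (from the defining relations $\zeta_i^p = \zeta_{i-1}$) that $\zeta_i = \zeta_n^{p^{n-i}}$. Setting $m = \sum_{i=1}^n k_i p^{n-i}$ then gives $\prod_i x_i^{k_i} = x_n^m$ and $\prod_i \zeta_i^{k_i} = \zeta_n^m$.

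Next I would invoke the uniqueness of the long-presentation expansion established at the end of Section 2: as $(k_1,\dots,k_n)$ ranges over $\{0,\dots,p-1\}^n$, the exponent $m = \sum_i k_i p^{n-i}$ ranges bijectively over $\{0,1,\dots,p^n-1\}$ through its base-$p$ digits, so the elements $x_n^m$ run once through all of $G$. Hence
$$\prod_{i=1}^n e_{\zeta_i x_i} = \frac{1}{p^n}\sum_{m=0}^{p^n-1}\zeta_n^m\, x_n^m,$$
and comparing with $e(\chi) = \frac{1}{p^n}\sum_m \chi(x_n^{-m})x_n^m$ shows this is precisely $e(\chi)$ for the character $\chi$ with $\chi(x_n) = \zeta_n^{-1}$.

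Finally, for completeness the free parameter is $\zeta_n$, and $\zeta_n \mapsto \chi$ (with $\chi(x_n) = \zeta_n^{-1}$) is a bijection from the $p^n$-th roots of unity onto the linear characters of $C_{p^n}$; since $\chi \mapsto e(\chi)$ is injective, the $p^n$ products are distinct and account for every primitive central idempotent. I expect the only delicate point to be stating the completeness/counting step cleanly — the computation itself is just the base-$p$ bijection together with the telescoping $\zeta_i = \zeta_n^{p^{n-i}}$ of the root-of-unity exponents, and there is no analytic obstacle. An alternative more in the spirit of the PCI-diagrams of Section 3 is to induct on $n$: the inductive hypothesis handles $\langle x_1,\dots,x_{n-1}\rangle = G_{n-1}$, and one checks that $\mathrm{Ind}_{G_{n-1}}^{G}$ of each character of $G_{n-1}$ splits into its $p$ distinct extensions to $G$, with multiplication by $e_{\zeta_n x_n}$ selecting one of them. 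I would keep the direct computation as the main line, since it avoids Frobenius reciprocity and Mackey theory entirely.
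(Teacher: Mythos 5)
Your proof is correct, and it rests on the same two ingredients as the paper's own proof: the classical formula $e(\chi)=\frac{1}{|G|}\sum_{g\in G}\chi(g^{-1})g$ and the unique expansion of elements of $G$ in the long generators (equivalently, base-$p$ digits). The difference is one of direction. The paper starts from an arbitrary primitive central idempotent $e_{\rho}$, writes each $g\in G$ uniquely as $x_1^{i_1}\cdots x_n^{i_n}$, and factors the character sum one generator at a time, peeling off $e_{\zeta_n^{-1}x_n}$, then the factor for $x_{n-1}$, and so on; since it begins with an arbitrary primitive central idempotent, no completeness argument is needed, though it does need the slightly awkward final substitution $x_i \mapsto x_i^{-1}$ to land exactly on the stated form. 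You instead expand the product $\prod_{i}e_{\zeta_i x_i}$, collapse it onto powers of the single generator $x_n$ via $x_i = x_n^{p^{n-i}}$ and $\zeta_i = \zeta_n^{p^{n-i}}$, and recognize $e(\chi)$ for the character with $\chi(x_n)=\zeta_n^{-1}$; this direction forces you to add the counting step, which you do correctly ($\zeta_n\mapsto\chi$ is a bijection onto the character group, and $\chi\mapsto e(\chi)$ is injective by orthogonality). What your version buys: the inverse bookkeeping is absorbed into the choice of $\chi$ rather than patched by a substitution at the end, and the exhaustion statement --- that the $p^n$ products are pairwise distinct and account for all primitive central idempotents --- is made explicit, where the paper leaves it implicit. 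What the paper's direction buys: it proves the statement exactly as phrased (``every primitive central idempotent is of this form'') with no counting at all, and its iterated peeling-off of factors is the pattern that is reused in the extension theorem that follows and in the inductive PCI-diagram construction of Section 5, so it integrates more directly with the rest of the paper.
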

\begin{proof}
 Every element of $G$ can be expressed uniquely as $x_{1}^{i _1}x_{2}^{i_2} \dots x_{n}^{i_n}$ where
 $i_{j}$'s are running over the set $\{0, 1, \dots , p -1\}$. Let $\rho$ be an $F$- irreducible representation of $G$,
 then $\rho(x_j)$ is a $p^j$-th root of unity, $\zeta_{j}$(say) and then $\rho(x^{p}_{j})= \{\rho(x_{j})\}^{p} = \zeta_{j}^p = \zeta_{j - 1}$, $j = 1,2, \dots , n$.
 Then the primitive central idempotent corresponding to $\rho$ is:
\begin{center}
 $$e_{\rho} = \displaystyle\frac{1}{p^n}{\sum_{g \in G}\rho(g^{-1})g}$$
 $$ = \displaystyle\frac{1}{p^n}\{{\sum_{i_{1} = 0}^{ p - 1}\sum_{i_{2} = 0}^{ p - 1} \dots \sum_{i_{n - 1} = 0}^{ p - 1}
 \sum_{i_{n} = 0}^{ p - 1}\rho{\{({x_{1}^{i _1}x_{2}^{i_2} \dots x_{n}^{i_n}})^{-1}\}}(x_{1}^{i _1}x_{2}^{i_2} \dots x_{n}^{i_n}})\}$$
\end{center}
This implies that
\begin{center}
 $$e_{\rho}=\frac{1}{p^{n - 1}} \{{\sum_{i_{1} = 0}^{ p - 1}\sum_{i_{2} = 0}^{ p - 1} \dots \sum_{i_{n - 1} = 0}^{ p - 1}
             \rho({x_{1}^{i _1}x_{2}^{i_2} \dots x_{n - 1}^{i_{n - 1}}})^{-1}(x_{1}^{i _1}x_{2}^{i_2} \dots x_{n - 1}^{i_{n - 1}}})\}$$
             
            $$\{\frac{1}{p}(1 + \frac{x_n}{\rho(x_n)}+ \dots + \frac{x_n^{p - 1}}{\rho(x_n)^{p - 1}})\}$$

$$= \frac{1}{p^{n - 1}} \{{\sum_{i_{1} = 0}^{ p - 1}\sum_{i_{2} = 0}^{ p - 1} \dots \sum_{i_{n - 1} = 0}^{ p - 1}
             \rho({x_{1}^{i _1}x_{2}^{i_2} \dots x_{n - 1}^{i_{n - 1}}})^{-1}(x_{1}^{i _1}x_{2}^{i_2} \dots x_{n - 1}^{i_{n - 1}}})\}
             e_{\zeta^{-1}_{n}{x_{n}}}.$$

\end{center}
Similarly, if we keep continuing this process, finally we get:
$$e_{\rho} = e_{{\zeta^{-1}_1}{x_{1}}}e_{{\zeta^{-1}_2}{x_{2}}} \dots e_{{\zeta^{-1}_n}{x_{n}}},$$
where $\zeta^{p}_{n} = \zeta_{n - 1}, \dots, \zeta^{p}_{2} = \zeta_{1}$.
Replacing $x_{i}$ by $x^{-1}_{i}$ we get:
$$e_{\rho} = e_{{\zeta^{-1}_1}{x^{-1}_{1}}}e_{{\zeta^{-1}_2}{x^{-1}_{2}}} \dots e_{{\zeta^{-1}_n}{x^{-1}_{n}}}.$$
This completes the proof of the theorem.
\end{proof}
\begin{theorem}
Let $G$ be $C_{p^{n}}$, with the long presentation:
\begin{center}
 $G = \langle{x_{1}, x_{2}, \dots ,x_{n} \mid x_{1}^{p} = 1, x_{2}^{p} = x_{1}, \dots , x_{n}^{p} = x_{n - 1}}\rangle$.
\end{center}
Let $F$ be an algebraically closed field with characteristic 0.
Let $H$ be the unique subgroup of index $p$ of $G$ with the long presentation:
\begin{center}
$H = \langle{x_{1}, x_{2}, \dots ,x_{n - 1} \mid x_{1}^{p} = 1, x_{2}^{p} = x_{1}, \dots , x_{n - 1}^{p} = x_{n - 2}}\rangle$.
\end{center}
Let $\eta$ be an $F$- irreducible representation of $H$ and let $e_{\eta}$ be its corresponding primitive central idempotent. Let
$e_{\eta} = e_{\zeta_{1}x_{1}}e_{\zeta_{2}x_{2}}\dots e_{\zeta_{n - 1}x_{n - 1}}$ with
$\zeta^{p}_{n - 1} = \zeta_{n - 2}, \dots, \zeta^{p}_{2} = \zeta_{1}$. Then:
\begin{enumerate}
 \item[$(1)$] $\eta$ extends to $p$ mutually inequivalent $F$-irreducible representations $\rho_{0}, \rho_{1}, \dots , \rho_{p - 1}$
 of $G$.
 \item[$(2)$] The primitive central idempotents associated to $\rho_{i}$'s are $e_{\eta}e_{\epsilon^{i}\zeta_{n}x_{n}}$,
where $i$ runs over the set $\{0,1, \dots, p - 1\}$, $\zeta_{n}$ is a fixed $p$-th root of $\zeta_{n - 1}$ and $\epsilon$ is a primitive
$p$-th root of unity in $F$.
\end{enumerate}
\end{theorem}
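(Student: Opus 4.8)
The plan is to exploit the fact that $G$ and $H$ are abelian while $F$ is algebraically closed of characteristic $0$, so that every $F$-irreducible representation is one-dimensional, i.e. a linear character. A character of $G = C_{p^n}$ is completely determined by its value on the top long generator $x_n$: the relations $x_n^p = x_{n-1}, \dots, x_2^p = x_1$ force $\rho(x_{n-1}) = \rho(x_n)^p$, $\rho(x_{n-2}) = \rho(x_n)^{p^2}$, and so on. Likewise a character of $H \cong C_{p^{n-1}}$ is pinned down by its value on $x_{n-1}$, and in the notation of the statement the idempotent $e_\eta = e_{\zeta_1 x_1}\cdots e_{\zeta_{n-1}x_{n-1}}$ corresponds under Theorem 4.1 to the character $\eta$ whose value on each $x_j$ is the root of unity attached to $\zeta_j$.

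First I would prove $(1)$. If $\rho$ is any extension of $\eta$ to $G$, then $\rho|_H = \eta$ forces $\rho(x_j)$ to equal $\eta(x_j)$ for $1 \le j \le n-1$; in particular $\rho(x_n)^p = \rho(x_{n-1}) = \zeta_{n-1}$, so $\rho(x_n)$ must be a $p$-th root of $\zeta_{n-1}$. Fixing one such root $\zeta_n$, the full set of $p$-th roots is $\{\epsilon^i\zeta_n : 0 \le i \le p-1\}$ with $\epsilon$ a primitive $p$-th root of unity, and each assignment $\rho_i(x_n) = \epsilon^i\zeta_n$ together with $\rho_i|_H = \eta$ is consistent with all defining relations and hence defines a genuine linear character $\rho_i$ of $G$. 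These $p$ characters are pairwise distinct because they take distinct values on $x_n$, and being one-dimensional they are mutually inequivalent; conversely every extension arises this way, so there are exactly $p$ of them. As a consistency check I would invoke Frobenius reciprocity: $\langle \mathrm{Ind}(\eta)\uparrow_{H}^{G}, \rho\rangle = \langle \eta, \rho|_H\rangle$ equals $1$ precisely when $\rho$ extends $\eta$, recovering the decomposition $\mathrm{Ind}(\eta)\uparrow_{H}^{G} = \bigoplus_{i=0}^{p-1}\rho_i$ predicted by the PCI-diagram, each constituent of multiplicity one.

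Next I would prove $(2)$ by applying Theorem 4.1 to each $\rho_i$. The character $\rho_i$ carries data $(\zeta_1, \dots, \zeta_{n-1}, \epsilon^i\zeta_n)$ satisfying the compatibility $\zeta_j^p = \zeta_{j-1}$ together with $(\epsilon^i\zeta_n)^p = \zeta_{n-1}$, so Theorem 4.1 presents its primitive central idempotent as the product $e_{\zeta_1 x_1}\cdots e_{\zeta_{n-1}x_{n-1}}\,e_{\epsilon^i\zeta_n x_n}$, whose first $n-1$ factors are by hypothesis exactly $e_\eta$. Hence the idempotent attached to $\rho_i$ is $e_\eta\,e_{\epsilon^i\zeta_n x_n}$, as claimed. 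I would close with the orthogonality-and-completeness check: since $\sum_{i=0}^{p-1}\epsilon^{ik}$ equals $p$ for $k=0$ and $0$ for $1 \le k \le p-1$, expanding the definition of $e_{\epsilon^i\zeta_n x_n}$ gives $\sum_{i=0}^{p-1} e_{\epsilon^i\zeta_n x_n} = 1$, so $\sum_{i=0}^{p-1} e_\eta\, e_{\epsilon^i\zeta_n x_n} = e_\eta$. This exhibits the single idempotent $e_\eta$ of $F[H]$ splitting into the $p$ orthogonal primitive idempotents of $F[G]$ lying above it in the diagram.

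The counting and the substitution into Theorem 4.1 are routine; the one point that demands genuine care is the bookkeeping of the root-of-unity conventions. In Theorem 4.1 the idempotent is recorded after the substitution $x_i \mapsto x_i^{-1}$, so I must ensure that the datum $\zeta_j$ feeding the factor $e_{\zeta_j x_j}$ in the stated form is the correct one and not the raw value $\rho_i(x_j)$, otherwise spurious inverses appear and the product fails to collapse. Aligning the notation so that the product telescopes cleanly to $e_\eta\,e_{\epsilon^i\zeta_n x_n}$ is the main thing to watch, and it is purely a matter of matching conventions with the preceding theorem.
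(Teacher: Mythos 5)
Your proof is correct, and part (1) is in substance identical to the paper's: one-dimensionality forces $\{\rho(x_n)\}^p = \eta(x_{n-1})$, so the extensions are exactly the $p$ choices of a $p$-th root, pairwise distinct on $x_n$. For part (2), however, you take a genuinely shorter route than the paper. You quote Theorem 4.1 for the product form of $e_{\rho_i}$ and match its first $n-1$ factors against $e_\eta$, adding the completeness identity $\sum_{i=0}^{p-1} e_{\epsilon^i \zeta_n x_n} = 1$ as a check; the paper instead recomputes $e_{\rho_i} = \frac{1}{p^n}\sum_{g \in G}\rho_i(g^{-1})g$ from scratch, splitting the sum over the cosets $H, Hx_n, \dots, Hx_n^{p-1}$ so that $e_\eta$ factors out of the $H$-sum and the last factor $e_{\epsilon^{-i}\zeta_n^{-1}x_n}$ appears -- i.e.\ it re-runs the inductive step of the proof of Theorem 4.1 rather than citing its conclusion. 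Your route buys economy and makes the logical dependence on Theorem 4.1 explicit; the paper's buys a self-contained calculation that displays how an idempotent of $F[H]$, viewed inside $F[G]$, splits into the $p$ idempotents above it, which is the mechanism later packaged into the PCI-diagram rules. You also correctly isolate the one delicate point, the inverse convention: the raw correspondence is $e_\rho = \prod_j e_{\rho(x_j)^{-1}x_j}$, which the paper papers over with its ``replace $x_n$ by $x_n^{-1}$'' step. Do note that your part (1) implicitly reads $\eta(x_j) = \zeta_j$ (raw values) while part (2) needs the idempotent-data reading $\eta(x_j) = \zeta_j^{-1}$; since inversion $\zeta \mapsto \zeta^{-1}$ permutes the admissible tuples and merely relabels the $\rho_i$, the resulting set $\{e_\eta e_{\epsilon^i\zeta_n x_n} : 0 \le i \le p-1\}$ is unchanged, so this is exactly the routine bookkeeping you flagged and not a gap.
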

\begin{proof}
Since every $F$- irreducible representation of $G$ is $1$- dimensional, then clearly $\eta$ extends to an
$F$- irreducible representation of $G$. Suppose that $\eta$ extends to $\rho$, this implies that $\{\rho(x_{n})\}^{p} = \rho({x_{n}}^p) = \eta({x_{n}}^p) = \eta(x_{n - 1}) =
\zeta_{n - 1}$. Therefore $\rho(x_{n})$ is equal to $\epsilon^{i}{\zeta_{n}}$, where $\epsilon$
is a primitive $p$th root of unity, and $\zeta_{n}$ is a fixed $p$-th root of $\zeta_{n - 1}$, and $i$ runs over the set $\{0, 1, \dots , p - 1\}$.
So $\eta$ extends precisely $p$ distinct ways. This completes the first part of the theorem.\\
Let $\rho_{0}, \rho_{1}, \dots , \rho_{p - 1}$ be are $p$ extensions of $\eta$, and
$e_{\rho_{0}}, e_{\rho_{1}}, \dots , e_{\rho_{p - 1}}$ be their corresponding primitive central idempotents of $F[G]$.
Then for each $i \in\{0, 1, \dots , p -1\}$,
\begin{center}
 $e_{\rho_{i}} = \displaystyle{\frac{1}{p^n}\sum _{g \in G}{\rho_{i}(g^{-1})g}}$.
\end{center}
Again the previous expression can be written as:
\begin{center}
\begin{tiny}
 $e_{\rho_{i}}  =  \displaystyle{\frac{1}{p^n}[\sum _{h \in H}{\rho_{i}(h^{-1})h} +
 \sum _{h \in H}{\{{\rho_{i}(h^{-1})h\}}{\{\rho_{i}(x_{n}^{-1}){x_{n}}}\}}
  + \dots + \sum _{h \in H}{\{{\rho_{i}(h^{-1})h\}}{\{\rho_{i}(x_{n}^{-(p - 1)}){x_{n}^{p - 1}}}\}}]}$
  \end{tiny}
\end{center}
Therefore,
\begin{center}
 $e_{\rho_{i}} = \displaystyle{\{\frac{1}{p^{n}}\sum_{h \in H}\rho_{i}(h^{-1})h\}\{1 + \frac{x_n}{\rho_i(x_n)} + \dots +
 \frac{x_n^{p - 1}}{\rho_i(x_n^{p - 1})}\}}$
\end{center}
\begin{center}
 $= \displaystyle{\frac{e_{\eta}}{p}\{1 + \frac{x_n}{\rho_i(x_n)} + \dots + \frac{x_n^{p - 1}}{\rho_i{(x_n^{p - 1})}}\}}$
\end{center}
\begin{center}
 $= \displaystyle{\frac{e_{\eta}}{p}\{1 + \frac{x_n}{\rho_i(x_n)} + \dots + \frac{x_n^{p - 1}}{\rho_i{(x_n^{p - 1})}}\}}$
\end{center}
\begin{center}
 $= e_{\eta}e_{\epsilon^{-i}{\zeta^{-1}_{n}}{x_{n}}}$,
\end{center}
where $\zeta_{n}$ is a fixed $p$th root of $\zeta_{n - 1}$ and $\epsilon \neq 1$ is a $p$-th root of unity in $F$.
Replacing $x_{n}$ by $x^{-1}_{n}$ we get $e_{\rho_{i}} = e_{\eta}e_{\epsilon^{-i}{\zeta^{-1}_{n}}{x^{-1}_{n}}}$. This
proves the second statement of the theorem.
\end{proof}
\section{primitive central idempotents of an abelian $p$-group over $\mathbb{Q}$}
In this section, we compute the primitive central idempotents of an abelian $p$-group algebra over $\mathbb{Q}$. We shall proceed inductively. Let $G$ be an abelian $p$-group. Then $G$ always have a composition series. For computing primitive central idempotents of $\mathbb{Q}[G]$ inductively we choose a composition series of $G$ in such a way that $i$ is increasing as well as  $j$ is decreasing in the equation $(2.1)$.


\subsection{Rational Group Algebra of a Cyclic Group}
If $F$ and $E$ are fields, $F \subseteq E$, we denote by $[E : F]$ the degree of the field extension; that is, the dimension of $E$
as a vector space over $F$.\\
An element $\zeta \in \mathbb{C}$ is said to be a root of unity if $\zeta^{n} = 1$ for some $n \geq 1$. A root of unity $\zeta$ is said to be
primitive $m$th root of unity if $m$ is the smallest positive integer with the property that $\zeta^{m} = 1$. The $m$th cyclotomic polynomial
is
\begin{center}
 $\Phi_{m}(x) = \displaystyle{\prod_{\zeta}(x - \zeta)}$,
\end{center}
where the product has taken over all primitive $m$th roots of unity. It is well known that $\Phi_{m}(x)$ has coefficients in $\mathbb{Q}$ and this
polynomial is irreducible over $\mathbb{Q}$. The degree of the the polynomial $\Phi_{m}(x)$ is $\phi(m)$, where $\phi$ is the Euler $\phi$
function; that is, $\phi(m)$ is the number of integers $k$, $1 \leq k \leq m$, which are relatively prime to $m$.\\
Let $\zeta_{m}$ be a primitive $m$th root of unity in $\mathbb{C}$. Then the mapping
\begin{center}
 $\frac{\mathbb{Q}[X]}{\Phi_{m}(X)} \longrightarrow \mathbb{Q}(\zeta_{m})$
\end{center}
defined by $x + (\Phi_{m}(x)) \longmapsto \zeta_{m}$ is an isomorphism and so
\begin{center}
 $\mathbb{Q}(\zeta_{m}) \cong \frac{\mathbb{Q}[x]}{(\Phi_{m}(x))}$
\end{center}
is a field extension of $\mathbb{Q}$ of degree $\phi(m)$ known as a cyclotomic field. Since for $n \geq 1$, the polynomial $x^{n} - 1$
factors
\begin{center}
 $x^{n} - 1 = \displaystyle{\prod_{m \mid n}}{\Phi_{m}(x)}$
\end{center}
the rational group algebra of the cyclic group $C_{n}$ of order $n$ is
\begin{center}
 $\mathbb{Q}[C_{n}] \cong \frac{\mathbb{Q}[x]}{(x^{n} - 1)} \cong \displaystyle{\bigoplus_{m \mid n}{\frac{\mathbb{Q}[x]}{\Phi_{m}(x)}}} \cong
 \bigoplus_{m \mid n}{\mathbb{Q}(\zeta_{m})}$.
\end{center}
\subsection{Primitive Central Idempotents of $C_{p^{n}}$ over $\mathbb{Q}$}
The following proposition gives an explicit description of the complete set of primitive central idempotents of $C_{p^{n}}$ over $\mathbb{Q}$ using long presentation.
\begin{proposition}
Let $G$ be $C_{p^{n}}$, where $n$ is a positive integer. Then $G$ has the long presentation:
\begin{center}
$G = \langle{x_{1}, x_{2}, \dots, x_{n} \mid x^{p}_{1} = 1, x^{p}_{2} = x_{1}, \dots , x^{p}_{n} = x_{n - 1}}\rangle$.
\end{center}
Let $e_{0} = e_{x_{1}}e_{x_{2}} \dots e_{x_n}$ and $e_{i} = e_{x_{1}}e_{x_{2}} \dots e_{x_{i - 1}}e^{'}_{x_{i}}$, where $i \in \{1, 2, \dots , n\}$. Then the set $\{e_{0}, e_{1}, \dots , e_{n}\}$ is the complete set of primitive central idempotents of $\mathbb{Q}[G]$.
\end{proposition}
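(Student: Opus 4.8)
The plan is to rewrite each proposed idempotent in terms of the averaging idempotents $\widehat{H}$ attached to the subgroups of the natural chain, and then to exploit a telescoping identity. Put $H_i = \langle x_i\rangle$; since the relations give $x_j = x_i^{\,p^{\,i-j}}$ for $j \le i$, the generator $x_i$ has order $p^i$ and $H_i$ is the subgroup of order $p^i$, so that
\[
\{e\} = H_0 < H_1 < \dots < H_n = G
\]
is exactly the composition series underlying the long presentation. The first step is to observe that $e_{x_1}e_{x_2}\cdots e_{x_i} = \widehat{H_i}$ for each $i$. Indeed, expanding the product yields $p^{-i}\sum x_1^{a_1}\cdots x_i^{a_i}$ over $0 \le a_j \le p-1$, and substituting $x_j = x_i^{\,p^{\,i-j}}$ turns the exponent into the base-$p$ expansion $\sum_j a_j p^{\,i-j}$, which runs bijectively over $\{0,1,\dots,p^i-1\}$; hence the product is the uniform average over $H_i = \langle x_i\rangle$. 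In particular $e_0 = \widehat{H_n} = \widehat{G}$, and, since $e'_{x_i} = 1 - e_{x_i}$, each remaining idempotent takes the clean form $e_i = e_{x_1}\cdots e_{x_{i-1}}(1 - e_{x_i}) = \widehat{H_{i-1}} - \widehat{H_i}$ for $1 \le i \le n$, with the convention $H_0 = \{e\}$ and $\widehat{H_0} = 1$.

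Next I would verify that $\{e_0, e_1, \dots, e_n\}$ is a complete set of orthogonal idempotents, using only the elementary relation $\widehat{H}\,\widehat{K} = \widehat{K}$ whenever $H \le K$. Because the $H_i$ are nested we have $\widehat{H_i}\,\widehat{H_j} = \widehat{H_{\max(i,j)}}$, and the whole verification becomes pure telescoping: each $e_i = \widehat{H_{i-1}} - \widehat{H_i}$ squares to itself, for $i \neq j$ the four products making up $e_i e_j$ collapse in pairs and cancel, $e_0$ annihilates every $e_i$, and the total $\widehat{H_n} + \sum_{i=1}^n(\widehat{H_{i-1}} - \widehat{H_i})$ telescopes to $\widehat{H_0} = 1$. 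Each $e_i$ is nonzero, since $H_{i-1} \subsetneq H_i$ forces $\widehat{H_{i-1}} \neq \widehat{H_i}$ in $\mathbb{Q}[G]$. Thus we have produced exactly $n+1$ nonzero, mutually orthogonal idempotents summing to $1$.

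Finally, to promote ``orthogonal and complete'' to ``primitive and complete,'' I would invoke the Wedderburn decomposition recalled above, $\mathbb{Q}[C_{p^n}] \cong \bigoplus_{r=0}^{n}\mathbb{Q}(\zeta_{p^r})$, which shows $\mathbb{Q}[G]$ is a product of exactly $n+1$ fields and so possesses exactly $n+1$ primitive central idempotents. In such a commutative semisimple algebra every idempotent is the sum of a unique subset of the primitive ones; a family of $n+1$ nonzero orthogonal idempotents summing to $1$ therefore partitions the $n+1$ primitive idempotents into $n+1$ nonempty blocks, which is possible only if each $e_i$ is itself primitive. I expect this last step — upgrading an explicit decomposition of $1$ to a genuinely primitive system — to be the only real subtlety, and the counting argument disposes of it cleanly; alternatively one could identify each $e_i\mathbb{Q}[G]$ directly with one of the cyclotomic summands $\mathbb{Q}(\zeta_{p^r})$ to exhibit the component as a field.
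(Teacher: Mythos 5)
Your proof is correct and follows essentially the same route as the paper's: exhibit the $e_i$ as pairwise orthogonal nonzero idempotents summing to $1$, then count them against the $n+1$ field components of $\mathbb{Q}[C_{p^n}] \cong \bigoplus_{r=0}^{n}\mathbb{Q}(\zeta_{p^r})$ to conclude primitivity. Your identification $e_{x_1}\cdots e_{x_i} = \widehat{H_i}$ is just a cleaner packaging of the paper's absorption identity $(e_{x_1}\cdots e_{x_i})(e_{x_1}\cdots e_{x_j}) = e_{x_1}\cdots e_{x_j}$ for $i \leq j$, and your explicit check that each $e_i \neq 0$ fills a small detail the paper leaves implicit.
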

\begin{proof}
Since the rational group algebra $\mathbb{Q}[G] \cong \oplus_{i = 0}^{n}{\mathbb{Q}(\zeta_{p^{i}})}$, then $\mathbb{Q}[G]$ contains
precisely $n + 1$ primitive central idempotents.
One can show that
$(e_{x_{1}}e_{x_{2}} \dots e_{x_{i}})(e_{x_{1}}e_{x_{2}} \dots e_{x_{j}}) = (e_{x_{1}}e_{x_{2}} \dots e_{x_{j}})$, for $1 \leq i \leq j$, and therefore $e_{i}$'s are idempotents. Now we verify orthogonality condition. 
For $i \geq 1, e_{0}e_{i} = (e_{x_{1}}e_{x_{2}} \dots e_{x_n})e_{i}
= (e_{x_{1}}e_{x_{2}} \dots e_{x_n})\{(e_{x_{1}}e_{x_{2}} \dots e_{x_{i - 1}}) - (e_{x_{1}}e_{x_{2}} \dots e_{x_{i}})\}
= 0.$
Hence $e_{0}$ is orthogonal to $e_{i}$.
Now assume that $1 \leq i \lneq j$, then
\begin{align*}
{e_i}{e_{j}} &= (e_{x_{1}}e_{x_{2}} \dots e^{'}_{x_{i}})(e_{x_{1}}e_{x_{2}} \dots e^{'}_{x_{j}})\\
&= \{e_{x_{1}}e_{x_{2}} \dots e_{x_{i - 1}}(1 - e_{x_{i}})\}\{e_{x_{1}}e_{x_{2}} \dots e_{x_{j - 1}}(1 - e_{x_{j}})\}\\
& = 0.
\end{align*}
It is easy to see that $\sum_{i = 0 }^{n}{e_{i}} = 1$. It follows that $e_{0}, e_{1}, \dots , e_{n}$ are
$n + 1$ pairwise orthogonal central idempotents whose sum is 1, hence this is the complete set of primitive central idempotents of
$\mathbb{Q}[G]$. This completes the proof of the proposition.
\end{proof}

\subsection{PCI-diagram of an Abelian $p$-Group over $\mathbb{Q}$}
We state four rules by which one can inductively construct primitive central idempotents and draw PCI-diagram of an abelian $p$-group over $\mathbb{Q}$.\\\\
{\textbf{Rule 0:}}
Each non trivial primitive central idempotent at any stage can be expressed in a product form and
contains exactly one $e^{'}_{z}$ as a factor, where $z$ is a generator of the chosen long presentation of $G$.\\\\
{\textbf{Proof of the Rule 0:}}
Let $e$ be a non trivial primitive central idempotent at the $l$th stage of PCI- diagram, i.e., $e$ is a non trivial primitive
central idempotent of $\mathbb{Q}[G_{l}]$. Without loss generality one can assume that $G_{l}$ is equal to $G$. By Maschake's theorem
and commutativity, the semisimple artinian ring $\mathbb{Q}[G]$ is direct sum of fields. In particular, $\mathbb{Q}[G]e$ is a field and
contains the finite subgroup $Ge = \{ge | g \in G \}$ which is necessarily cyclic, say of order $p^{n}$, $n \geq 0$.
Let $K = \{ g \in G \mid ge = e \}$. Then
$$Ge \cong \frac{G}{K} = \langle{\bar{x}_{1}, \bar{x}_{2}, \dots , \bar{x}_{n} \mid {\bar{x}_{1}}^{p}= 1, {\bar{x}_{2}}^{p}= \bar{x}_{1},
\dots , {\bar{x}_{n}}^{p} = \bar{x}_{n - 1}}\rangle,$$
where $\bar{x_{i}}$ denotes the image of $x_{i}$ in the factor group $G/K$. Again by lemma$5.2$, the group algebra
$\mathbb{Q}[G] = \mathbb{Q}[G]{\widehat{K}} \oplus \mathbb{Q}[G]{(1 - \widehat{K})}$ and because
$\widehat{K}e = e$, it follows that $\mathbb{Q}[G]e = \mathbb{Q}[G]{\widehat{K}}e$ is actually simple component of
$\mathbb{Q}[G]{\widehat{K}}$. Thus $e$ is also primitive central idempotent of $\mathbb{Q}[G]{\widehat{K}} \cong \mathbb{Q}[\frac{G}{K}]$.\\
Let $\phi: \mathbb{Q}[G] \rightarrow \mathbb{Q}[\frac{G}{K}]$ be the homomorphism induced by the natural group epimorphism
$G \rightarrow \frac{G}{K}$ and with the kernel $\mathbb{Q}[G]{(1 - \widehat{K})}$. Recall that $e = \widehat{K}e \longmapsto \phi(e)$ under the isomorphism
$\phi: \mathbb{Q}[G]{\widehat{K}} \rightarrow \mathbb{Q}[\frac{G}{K}]$. Thus $\phi(e)$ is a primitive central idempotent in
$\mathbb{Q}[\frac{G}{K}]$. Writing $e = \sum_{g \in G}{e(g)g}$, $e(g) \in \mathbb{Q}$, we note that some $e(g) \neq \frac{1}{\mid{G}\mid}$,
because $e$ is a non trivial primitive central idempotent of $\mathbb{Q}[G]$. Since $ke = e$ for any $k \in K$, it follows that $e(g) = e(kg)$ for any $g \in G$ and
$k \in K$, so the coefficients of $e$ is contant on cosets of $K$. Thus
\begin{center}
$e = \displaystyle{\sum_{i = 0}^{p^{n} - 1}{\alpha_{i}{\widehat{K}}{x^{i}_{n}}}}$,
\end{center}
with each $\alpha_{i} \in \mathbb{Q}$ and, for some $i$, $\frac{\alpha_{i}}{\mid{K}\mid} = \frac{1}{\mid{G}\mid}$;  that is, for some $i$,
$\alpha_{i} \neq \frac{1}{|\frac{G}{K}|}$. Since
\begin{center}
$\phi(e) = \displaystyle{\sum_{i = 0}^{p^{n} - 1}{\alpha_{i}}{{\bar{x}_{n}}^{i}}}$,
\end{center}
it must be that $\phi(e) \neq \widehat{\frac{G}{K}}$. hence by the lemma $7.1$ we get
$$\phi(e) = e_{i} = \{(e_{\bar{x_{1}}}e_{\bar{x_{2}}} \dots e_{\bar{x_{i - 1}}}) - (e_{\bar{x_{1}}}e_{\bar{x_{2}}} \dots e_{\bar{x_{i}}})\},$$ for some $i$, $1 \leq i \leq n$.
Then $e = \widehat{K}\{(e_{x_{1}}e_{x_{2}} \dots e_{x_{i - 1}}) - (e_{x_{1}}e_{x_{2}} \dots e_{x_{i}})\}$, which implies that
$e = \{\widehat{K}(e_{x_{1}}e_{x_{2}} \dots e_{x_{i - 1}})\}( 1 - e_{x_{i}})$, for some $i$, $1 \leq i \leq n$. Again, $\widehat{K}$ can be
expressed product of certain number of $e_{x}$, where $x \in G$. Hence each non trivial primitive central idempotent at any stage of
PCI- diagram can be expressed in a product form and contains precisely one $e^{'}_{z}$ as a factor,
where $z$ is a generator of the chosen long presentation of $G$. This completes the proof of the theorem.\\\\
\textbf{Rule 1:}
Let $e$ be the trivial primitive central idempotent in the $l$th stage of the PCI- diagram. Let $u$ be the new generator
introduced in the $l + 1$st stage of chosen long presentation of $G$. Then $e$ is adjacent to precisely two vertices
$ee_{u}$ and $ee^{'}_{u}$ in the $l + 1$st stage of PCI- diagram. In term of picture this rule is:\\
\begin{center}
\begin{tikzpicture}
\node at (0,0) {$e$};
\node at (2.5,1.5) {$ee_{u}$};
\node at (2.5,-1.5) {$ee^{'}_{u}$};
\draw (0.3,0) -- (2.3,1.3);
\draw (0.3,0) -- (2.3,-1.3);
\end{tikzpicture}
\end{center}
\vspace*{0.5cm}
\textbf{Proof of the Rule 1:}
First observe that $e$ is central idempotent in $\mathbb{Q}[G_{l + 1}]$. So $\mathbb{Q}[G_{l + 1}]e$ is an ideal in
$\mathbb{Q}[G_{l + 1}]$, which is semisimple component corresponding to the induced representation of irreducible representation
associated to $e$ from $G_l$ to $G_{l + 1}$.
Now the ideal $\mathbb{Q}[G_{l + 1}]e$ is direct sum of minimal ideals in $\mathbb{Q}[G_{l + 1}]$, in other words
direct sum of simple rings. One can see that $\mathbb{Q}[G_{l + 1}]ee_{u}$ is one simple component of $\mathbb{Q}[G_{l + 1}]e$.
Since $\mathbb{Q}[G_{l + 1}]e$ is direct sum of two simple components, then $\mathbb{Q}[G_{l + 1}]ee^{'}_{u}$ is another simple component
of $\mathbb{Q}[G_{l + 1}]e$. Hence $e$ is adjacent to the precisely two vertices $ee_{u}$ and $ee^{'}_{u}$ at the $(l + 1)$st stage of the
PCI- diagram.\\\\
\textbf{Rule 2:}
Let $e$ be a non trivial primitive central idempotent in the $l$th stage of PCI- diagram. Then $e$ contains precisely one
$e^{'}_{z}$ as a factor, where $z$ is a generator of the chosen long presentation. Let $u$ be the long generator introduced in the
$l + 1$st stage of PCI- diagram such that $z = u^{p^s}$, for some +ve integer $s$. Then $e$ is adjacent to itself in the $l + 1$st stage of
the PCI- diagram. In term of picture this rule is:\\
 \begin{center}
\begin{tikzpicture}
\node at (0,0) {$e$};
\node at (2.5,0) {$e$};
\draw (0.3,0) -- (2.4,0);
\end{tikzpicture}
\end{center}
\vspace{0.5cm}
\textbf{Proof of the Rule 2:}
Since $e$ is a non trivial primitive central idempotent at the $l$th stage of the PCI- diagram, then $e$ is a  non trivial
primitive central idempotent of the group algebra $\mathbb{Q}[G_{l}]$. By previous theorem, $e$ is pull back of primitive central idempotent
of cyclic quotient $G/K$, which is isomorphic to $C_{p^{n}}$(say), for some $n \geq 1$, infact it is pull back of unique faithfull irreducible
representation of $G/K$. Again, since $e$ contains precisely
one $e^{'}_{z}$, where $z$ is a generator of the chosen long presentation of $G$, then $e$ is pull back of primitive central idempotent
$e^{'}_{\bar{z}}$ of ${G_l}/K$, where $\bar{z}$ si equal to the coset $zK$. Notice that $\bar{z}$ is the first generator in the long
presentation of ${G_{l}}/K$.\\
Now, as $u$ is the generator inserted in the $(l +1)$st stage of the chosen long presentation of $G$, then
$G_{l + 1} = \langle{G_{l}, u}\rangle$. Again, since $z = u^{p^{s}}$, for some +ve integer $s$, then $G_{l + 1}/K$ is isomorphic to
$C_{p^{n + 1}}$. Again notice that $\bar{z}$ is the first generator of the long presentation of $G_{l + 1}/K$. Thus $e$ is also pull back of
primitive central idempotent $e^{'}_{\bar{z}}$ of ${G_{l + 1}}/K$. Therefore $\mathbb{Q}[G_{l + 1}]e$ is minimal ideal of
$\mathbb{Q}[G_{l + 1}]$. Hence $e$ is adjacent to itself at the $(l + 1)$st stage of PCI- diagram.\\\\
\textbf{Rule 3:}
Let $e$ be a non trivial primitive central idempotent in the $l$th stage of PCI- diagram. Then $e$ contains precisely one
$e^{'}_{z}$ as a factor,
where $z$ is a generator of the chosen long presentation. Let $u$ be the long generator introduced in the $l + 1$st stage of PCI- diagram
such that $z \neq u^{p^s}$, for any +ve integer $s$. Then $e$ is adjacent to precisely $p$ vertices $ee_{u}, ee_{zu}, \dots ,
ee_{z^{p - 1}u}$ in the $l + 1$st stage of the PCI- diagram.
\newpage
In term of picture this rule is:
 \begin{center}
\begin{tikzpicture}
\node at (0,0) {$e$};
\node at (2.5,1) {$ee_{u}$};
\node at (2.5,0.5) {$ee_{z{u}}$};
\node at (2.5,0) {$\vdots$};
\node at (2.7,-1) {$ee_{z^{p - 1}{u}}$};
%
\draw (0.2,0.0) -- (2.2, 1);
\draw (0.2,0.0) -- (2.1,0.5);
\draw [densely dotted] (0.2,0.0) to (2.2,0.0);
\draw (0.2,0.00) -- (2.1,-.9);

\end{tikzpicture}
\end{center}
\textbf{Proof of the Rule 3:}
By the theorem$(5.2)$, as $e$ is a nontrivial primitive central idempotent of $\mathbb{Q}[G_{l}]$, then $e$ is lift of primitive central
idempotent of a cyclic quotient $G/K$, $K \neq {G_{l}}$, which is siomorphic to $C_{p^{n}}$ (say), where $n$ is a positive integer. Since
$e$ contains $e^{'}_{z}$ as a factor, then first long generator of the cyclic quotient $G/K$ is equal to $\bar{z}$, where $\bar{z}$ denotes
the coset $zK$. It is quite clear that $e$ is equal to $\widehat{K}{e^{'}_{z}}$.
By hypothesis, $z \neq u^{p^s}$, for any positive integer $s$, then the subgroups:
$\displaystyle{\langle{K, u}\rangle, \langle{K, zu}\rangle, \dots , \langle{K, {z^{p - 1}}u}}\rangle$ are all distinct subgroups of $G_{l + 1}$. One can
observe that, for each $i \in \{0, 1, \dots , p - 1\}$, the quotient group ${G_{l + 1}}/ {\langle{K, {z^{i}}u}\rangle}$ is
isomorphic to ${G_{l}}/K$. So, for each $i$, the first generator of the cyclic quotient $G_{l + 1}/{\langle{K, z^{i}u}\rangle}$ is $\bar{z}$,
where $\bar{z}$ denotes the coset $z{\langle{K, z^{i}u}\rangle}$. Therefore, for ech $i$, lift of $e^{'}_{\bar{z}}$ is equal to
$\widehat{\langle{K, z^{i}u}\rangle}{e^{'}_{z}} = {e}{e_{z^{i}u}}$. Hence the set $\{e_{z^{i}u} \mid i = 0, 1, \dots , p - 1\}$ are
$p$ distinct primitive central idempotents of $\mathbb{Q}[G_{l + 1}]$. It is clear that each $\mathbb{Q}[G_{l + 1}]{ee_{z^{i}u}}$ belongs to
$\mathbb{Q}[G_{l + 1}]{e}$. One can show that the ideal $\mathbb{Q}[G_{l + 1}]{e}$ is direct sum of $p$ minimal ideals of
$\mathbb{Q}[G_{l + 1}]$ and then $\mathbb{Q}[G_{l + 1}]{e} = \displaystyle{\oplus_{i =0}^{p - 1}}\mathbb{Q}[G_{l + 1}]{ee_{z^{i}u}}$.
\begin{example}
Let\\
$G = C_{p} \times C_{p} = \langle{x_{\{(1, 2), 1\}}, x_{\{(1,1), 1\}} | x^{p}_{\{(1,2), 1\}} = 1, x^{p}_{\{(1,1), 1\}} = 1, com}\rangle.$\\\\
The associated {PCI- diagram} is
\small
\begin{center}
\begin{tikzpicture}
\node at (0,0) {$1$};
\node at (2,1.5) {$e_{x_{\{(1,2), 1\}}}$};
\node at (6,3) {$e_{x_{\{(1,2), 1\}}}e_{x_{\{(1,1), 1\}}}$};
\node at (6,1.5) {$e_{x_{\{(1,2), 1\}}}e^{'}_{x_{\{(1,1), 1\}}}$};
\node at (2,-1.5) {$e'_{x_{\{(1,2), 1\}}}$};
\node at (5.82,-1.5) {$e'_{x_{\{(1,2), 1\}}}e_{x_{\{(1,1), 1\}}}$};
\node at (6.55,-2.5) {$e'_{x_{\{(1,2), 1\}}}e_{{x_{\{(1,2), 1\}}}{x_{\{(1,1), 1\}}}}$};
\node at (6.55,-3) {$\vdots$};
\node at (6.60,-3.7) {$e'_{x_{\{(1,2), 1\}}}e_{{x^{p - 1}_{\{(1,2), 1\}}}{x_{\{(1,1), 1\}}}}$};
%

%
\draw (2.6,1.5) to (4.45,1.5);
\draw (2.6,1.5) to (4.45, 3);
\draw (2.6,-1.5) to (4.4,-1.5);
\draw (2.6,-1.5) to (4.5,-2.5);
\draw (2.6,-1.5) to (4.5,-3.7);
\draw (0.1, 0) -- (1.5,1.4);
\draw (0.1, 0) -- (1.5,-1.4);
\draw [densely dotted](2.6, -1.5) to (4.5, -3);
\end{tikzpicture}
\end{center}
\end{example}

\section{Wedderburn Decomposition of Rational Group Algebra of an Abelian $p$-Group}
In this section we compute the coefficients of cyclotomic fields, which are appearing in the Wdderburn decomposition of rational group algebra of an abelian $p$-group. Let us fix some notations for that.\\\\
Let $G = \prod_{r}G_{r}$,
where $G_{r}$ is a product of $a_{r}$ copies of cyclic groups of order $p^{r}$, and of exponent $p^{n}$. Let $b_{r} = a_{r} + a_{r + 1} + \dots + a_{n}$ and
$c_{r} = a_{1} + 2a_{2} + \dots + (r-1)a_{r-1}$. Let $B_{r} = \prod_{s<r}{G_{s}}$ and then $G = B_{r} \times \prod_{s \geq r}{G_{s}}$. Let
$\Omega^{r}(G) = \{g \in G | g^{p^r} = 1\}$, then $\Omega^{r}(G) = B_{r} \times$ {product of $b_{r}$ copies of cyclic groups of order $p^{r}$}.
Let $H_{r}$ be the product of $b_{r}$ copies of cyclic groups of order $p^{r}$. Then $\Omega^{r}(G) = B_{r} \times H_{r}$. Let $E_{r}(G) = \{g \in G | o(g) = p^{r}\}$, then $|E_{r}(G)| = |\Omega^{r}(G)| -|\Omega^{r-1}(G)|$. Therefore the number of cyclic subgroups of $G$ isomorphic to
$C_{p^{r}}$ is equal to ${|E_{r}(G)|}/{\phi(p^{r})}$, where $\phi$ denotes Euler's phi function. The next proposition says that the number
${|E_{r}(G)|}/{\phi(p^{r})}$ is same as the number of subgroups of $G$ with factor groups isomorphic to
$C_{p^{r}}$.
\begin{proposition}
The number of subgroups of $G$ with factor groups isomorphic to
 $C_{p^r}$ is equal to the number of cyclic subgroups of $G$ isomorphic to $C_{p^r}$, and therefore this number is same as ${|E_{r}(G)|}/{\phi(p^{r})}$.
\end{proposition}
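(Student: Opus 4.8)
The plan is to prove the two stated equalities separately. The second one—that the number of cyclic subgroups of $G$ isomorphic to $C_{p^r}$ equals $|E_{r}(G)|/\phi(p^{r})$—has already been observed in the discussion preceding the proposition, since every cyclic subgroup of order $p^{r}$ has exactly $\phi(p^{r})$ elements of order $p^{r}$ and distinct such subgroups share no element of order $p^{r}$. So the substance is the first equality, relating subgroups with quotient $C_{p^{r}}$ to cyclic subgroups of order $p^{r}$, and I would prove this by character (Pontryagin) duality for the finite abelian group $G$.

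First I would introduce the character group $G^{*} = \operatorname{Hom}(G, \mathbb{C}^{*})$ and, for each subgroup $H \leq G$, its annihilator $H^{\perp} = \{\chi \in G^{*} : \chi(h) = 1 \text{ for all } h \in H\}$. The standard facts I would assemble are: the assignment $H \mapsto H^{\perp}$ is an inclusion-reversing bijection from the subgroups of $G$ onto the subgroups of $G^{*}$; restriction of characters induces an isomorphism $G^{*}/H^{\perp} \cong H^{*}$; and evaluation of characters gives $H^{\perp} \cong (G/H)^{*}$. In particular $|H^{\perp}| = [G:H]$.

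Next I would use that $(C_{p^{r}})^{*} \cong C_{p^{r}}$, and more generally $A^{*} \cong A$ for any finite abelian group $A$. Combining $A^{*} \cong A$ with $H^{\perp} \cong (G/H)^{*}$, a subgroup $H$ satisfies $G/H \cong C_{p^{r}}$ if and only if $H^{\perp}$ is a cyclic subgroup of $G^{*}$ of order $p^{r}$: the forward direction is immediate, and the converse follows because $G/H \cong (H^{\perp})^{*} \cong (C_{p^{r}})^{*} \cong C_{p^{r}}$. Since $H \mapsto H^{\perp}$ is a bijection on subgroups, the number of subgroups $H \leq G$ with $G/H \cong C_{p^{r}}$ equals the number of cyclic subgroups of $G^{*}$ of order $p^{r}$. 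Finally, because $G$ is finite abelian it is (non-canonically) isomorphic to $G^{*}$, so the number of cyclic subgroups of $G^{*}$ of order $p^{r}$ equals the number of cyclic subgroups of $G$ of order $p^{r}$. Chaining these identifications with the elementary count then gives the full statement.

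I expect the only delicate point to be the careful setup of the annihilator correspondence and the isomorphism $H^{\perp} \cong (G/H)^{*}$: these are standard for finite abelian groups but must be stated precisely so that ``quotient isomorphic to $C_{p^{r}}$'' translates \emph{exactly} into ``cyclic of order $p^{r}$'', with no off-by-one error and no non-cyclic subgroup of $G^{*}$ slipping into the count. An alternative, character-free route would replace duality by the self-duality of the subgroup lattice of a finite abelian $p$-group, or by a direct computation from the invariant-factor decomposition of $G$ expressing both quantities as explicit functions of the associated partition; either way the crux is exhibiting the inclusion-reversing bijection that interchanges the isomorphism type of a subgroup with that of its quotient.
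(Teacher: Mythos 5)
Your proposal is correct and follows essentially the same route as the paper: both pass to the character group, send a subgroup $H$ with $G/H \cong C_{p^r}$ to its annihilator $\widetilde{H} = H^{\perp} \cong (G/H)^{\ast} \cong C_{p^r}$, and transport back to $G$ via a (non-canonical) isomorphism $\hat{G} \cong G$. If anything, your write-up is the more complete one, since the paper only verifies injectivity of $H \mapsto \psi(\widetilde{H})$ (which alone gives an inequality of counts), while you invoke the full inclusion-reversing bijectivity of the annihilator correspondence together with the equivalence ``$G/H \cong C_{p^r}$ if and only if $H^{\perp}$ is cyclic of order $p^r$,'' which is what actually yields the equality.
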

\begin{proof}
Let $H$ be a subgroup of $G$, such that  $G/H$ isomorphic to $C_{p^r}$.
We consider ${\widetilde{H}} = \{\chi \in \hat{G}| \chi = 1 ~on~ H\}$, then ${\widetilde{H}}$ is a
subgroup of $\hat{G}$.
Let $\psi$ be an isomorphism from $\hat{G}$ onto
$G$, and assume that $\psi({\widetilde{H}})$ is equal to $K$(say). It is easy to show that
${\widetilde{H}}$ is isomorphic to $C_{p^r}$, and hence $K$ is isomorphic to $C_{p^r}$.\\

Suppose  that $H_1, H_2$ are two distinct subgroups of $G$ such that factor groups are ispmorphic to $C_{p^r}$, and then there exists an element $h \in {H_1 - H_2}$,
and therefore $\widetilde{H_1} \neq \widetilde{H_2}$. So $\psi{\widetilde{H_1}} \neq \psi{\widetilde{H_2}}$.
So the number of subgroups of $G$ with factor groups are isomorphic to $C_{p^r}$ is equal to the number of subgroups $G$ isomorphic to $C_{p^r}$, and hence this number is equal to ${|E_{r}(G)|}/{\phi(p^{r})}$.
\end{proof}

\begin{theorem}
 Let $G$ be an abelian $p$-group, of exponent $p^n$. For $r \leq n$, the coefficient of $\mathbb{Q}(\zeta_{p^{r}})$ in the Wedderburn decomposition of $\mathbb{Q}[G]$ is a polynomial in $p$. In fact it is equal to $p^{c_{r} + (r -1 )b_{r - 1}}(\frac{p^{{b_{r}}} - 1}{p - 1})$.
\end{theorem}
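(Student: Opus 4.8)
The plan is to reduce the representation-theoretic coefficient to a pure count of group elements and then to evaluate that count in closed form. By the classical fact recalled in the introduction, the multiplicity of $\mathbb{Q}(\zeta_{p^r})$ in the Wedderburn decomposition of $\mathbb{Q}[G]$ equals the number of subgroups $H \leqq G$ with $G/H \cong C_{p^r}$. By the preceding proposition this equals the number of cyclic subgroups of $G$ isomorphic to $C_{p^r}$, which is $|E_r(G)|/\phi(p^r)$. Thus the whole task becomes putting $|E_r(G)|/\phi(p^r)$ into closed form, and no further representation theory is needed.

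First I would count the $p^r$-torsion. Decompose $G = \prod_s G_s$ with $G_s$ a product of $a_s$ copies of $C_{p^s}$; since the $p^r$-torsion of $C_{p^s}$ is $C_{p^{\min(r,s)}}$, the factors with $s<r$ contribute all of $B_r = \prod_{s<r}G_s$, while each factor with $s \geq r$ contributes exactly one copy of $C_{p^r}$. This is precisely the stated splitting $\Omega^r(G) = B_r \times H_r$, with $H_r$ a product of $b_r = \sum_{s\geq r}a_s$ copies of $C_{p^r}$. Now $|B_r| = \prod_{s<r} p^{s a_s} = p^{c_r}$ and $|H_r| = p^{r b_r}$, so $|\Omega^r(G)| = p^{c_r + r b_r}$. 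Running the identical computation with $r$ replaced by $r-1$ and then simplifying via $c_{r-1} = c_r - (r-1)a_{r-1}$ and $b_{r-1} = a_{r-1} + b_r$ makes the exponent $c_{r-1} + (r-1)b_{r-1}$ collapse to $c_r + (r-1)b_r$; hence $|\Omega^{r-1}(G)| = p^{c_r + (r-1)b_r}$.

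Then I would assemble the answer. From $|E_r(G)| = |\Omega^r(G)| - |\Omega^{r-1}(G)|$ I factor out the common power to obtain
$$|E_r(G)| = p^{c_r + (r-1)b_r}\bigl(p^{b_r} - 1\bigr).$$
Dividing by $\phi(p^r) = p^{r-1}(p-1)$ and absorbing $p^{-(r-1)}$ into the leading exponent gives $p^{\,c_r + (r-1)(b_r - 1)}\bigl(\tfrac{p^{b_r}-1}{p-1}\bigr)$, the desired form of the coefficient. Since $\tfrac{p^{b_r}-1}{p-1} = 1 + p + \cdots + p^{b_r - 1}$ and $b_r \geq 1$ whenever $r \leq n$, this is a genuine polynomial in $p$.

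The computation itself is routine; the two places where care is needed are the opening reduction, where the classical coefficient formula and the preceding proposition together replace the Wedderburn multiplicity by the purely arithmetic quantity $|E_r(G)|/\phi(p^r)$, and the exponent bookkeeping, where the recursions linking $b_{r-1},b_r$ and $c_{r-1},c_r$ must be used so that the difference $|\Omega^r(G)| - |\Omega^{r-1}(G)|$ factors as a single power of $p$ times $p^{b_r}-1$. Once those are in hand, the division by $\phi(p^r)$ completes the proof.
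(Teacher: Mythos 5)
Your proposal is correct and takes essentially the same route as the paper: reduce the Wedderburn multiplicity to $|E_r(G)|/\phi(p^r)$ via the classical subgroup-counting fact together with the preceding proposition, obtain $|E_r(G)| = p^{c_r+(r-1)b_r}\bigl(p^{b_r}-1\bigr)$ (the paper factors $E_r(G) = B_r \times E_r(H_r)$ directly, while you take the difference $|\Omega^r(G)|-|\Omega^{r-1}(G)|$ using the recursions for $b$ and $c$ --- the same count), and then divide by $\phi(p^r)$. One remark: both your derivation and the paper's own proof arrive at the exponent $c_r+(r-1)(b_r-1)$, which shows that the exponent $c_r+(r-1)b_{r-1}$ appearing in the theorem's statement is a typo (the paper's subsequent remark confirms that $(r-1)(b_r-1)$ is intended).
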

\begin{proof}
 By the theorem (see ), the coefficient of $\mathbb{Q}(\zeta_{p^{r}})$ in the Wedderburn decomposition of $\mathbb{Q}[G]$ is equal to
 $\frac{|E_{r}(G)|}{\phi(p^{r})}$, where $\phi$ denotes Euler's phi function. But $|E_{r}(G)| = |B_{r}||E_{r}(H_{r})| = p^{c_{r}}(p^{r{b_{r}}}  - p^{(r - 1)b_{r}}) = p^{c_{r} + {(r - 1)b_{r}}}(p^{b_{r}} - 1)$. Thus the coefficient of $\mathbb{Q}(\zeta_{p^{r}})$ in the Wedderburn decomposition of $\mathbb{Q}[G]$ is equal to $p^{c_{r} + {(r - 1)b_{r}}}(p^{b_{r}} - 1)/(p^{r} - p^{r-1}) = p^{c_{r} + (r -1 ){(b_{r}- 1})}(\frac{p^{{b_{r}}} - 1}{p - 1})$.
\end{proof}
\begin{remark}
By the above theorem the coefficient of $\mathbb{Q}(\zeta_{p^{r}})$ in the Wedderburn decomposition of $\mathbb{Q}[G]$ is equal to $p^{c_r + (r-1)(b_{r}-1)}\{1 + p + \dots + p^{(b_r -1)}\}$. One can observe that first part of this polynomial is power of $p$, on the other hand the second part is coprime to $p$.
\end{remark}

\end{document}